\documentclass{article}
\usepackage[T1]{fontenc}
\usepackage[utf8]{inputenc}
\usepackage[british]{babel}
\usepackage{amsmath,amssymb,amsthm}
\usepackage[all]{xy}
\usepackage[left = 3cm, right = 3cm]{geometry}

\newtheorem{theorem}{Theorem}

\theoremstyle{definition}

\theoremstyle{remark}

\title{On finite trifactorised groups and Sylow and Hall theorems for skew braces}

\author{A. Ballester-Bolinches%
\thanks{Departament de Matem\`atiques, Universitat de Val\`encia, Av.\ Vicent Andrés Estellés, 19, 46100 Burjassot, Val\`encia, Spain; \texttt{Adolfo.Ballester@uv.es}, \texttt{Pedro.A.Perez@uv.es}, \texttt{Vicent.Perez-Calabuig@uv.es}; ORCID 0000-0002-2051-9075, 0009-0009-7082-9002, 0000-0003-4101-8656}
\and P. P\'erez-Altarriba\addtocounter{footnote}{-1}\footnotemark \and V. P\'erez-Calabuig\addtocounter{footnote}{-1}\footnotemark}

\begin{document}

\maketitle

\begin{abstract}
The aim of this short note is to show that the Sylow theorem (respectively Hall theorem) for finite skew braces proved by Truman in \cite{truman} is a direct consequence of the Sylow structure (resp. Hall structure) of a finite trifactorised group. A Cauchy theorem for finite skew braces naturally emerges. 
\end{abstract}

\noindent\emph{Mathematics Subject Classification (2020):  16T25, 20D20} 

\noindent\emph{Keywords:} trifactorised groups, Hall theorem, Sylow theorem, finite skew braces.

\bigskip

\emph{Every group considered is finite.}

We say that a group $G$ is a \emph{trifactorised group} if there exist subgroups $C$, $D$ and $K$ such that  $G = KC = KD = DC$. These groups naturally appear in proofs about groups factorised by two subgroups. They are also important in the structural study of skew braces. 

Let $(B,+,\cdot)$ be a skew brace, and call $K = (B,+)$ and $C = (B,\cdot)$. Then $C$ acts on $K$  by means of the \emph{lambda} action. The corresponding semidirect  $G$ with respect to this action is a trifactorised group $G = KC = KD = DC$, where $D$ is the diagonal subgroup (cf. \cite{categ_trip}).

Recall that a group $G$ is said to \emph{satisfy the $\pi$-Sylow theorem} (also called $D_\pi$ property) for a set of primes $\pi$ if every $\pi$-subgroup is contained in a Hall $\pi$-subgroup and every pair of Hall $\pi$-subgroups are conjugated (e.g. every soluble group satisfies the $\pi$-Sylow theorem for every set of primes~$\pi$).

It is well-known that in a factorised group, there exist a Sylow $p$-subgroup of the group which is the product of Sylow $p$-subgroups of the factors for each prime $p$. More generally we have:

\begin{theorem}[{\cite[Kapitel~VI. Satz~4.6]{Huppert}}]
\label{teo:pi-Sylowtheroem}
Let $\pi$ be a set of primes and let $G = AB$ be the product of the subgroups $A$ and~$B$. Assume also that $G$, $A$ and $B$ satisfies the $\pi$-Sylow theorem. Then there exist Hall $\pi$-subgroups $G_\pi$, $A_\pi$, and $B_\pi$ of $G$, $A$, and $B$, respectively, such that $G_\pi = A_\pi B_\pi$ is a Hall $\pi$-subgroup of~$G$. 
\end{theorem}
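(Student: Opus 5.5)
The plan is the classical counting argument for factorised groups, combined with the $\pi$-Sylow property to place Hall subgroups of the factors inside a common Hall subgroup of $G$. Write $|X|_\pi$ for the $\pi$-part of $|X|$. The basic identity is
\[
|G| = \frac{|A||B|}{|A\cap B|}.
\]
The order of operations is: first produce a Hall $\pi$-subgroup of the intersection, then extend it to the factors, then put everything inside one Hall $\pi$-subgroup of $G$, and finally count.

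\textbf{Step 1 (a Hall subgroup of $A\cap B$).} Take a Hall $\pi$-subgroup $G_\pi$ of $G$. Since $G=AB$, the double coset identity gives $G = A^{g}B$ for every $g\in G$. Consider the action of $A\times B$ on $G$ by $x\mapsto a^{-1}xb$. This action is transitive, and the stabiliser of $1$ is isomorphic to $A\cap B$. I would rather work with the coset space $\Omega=G/G_\pi$. The number of right cosets of $G_\pi$ is a $\pi'$-number. I would like to count orbits of $A$ and $B$ on $\Omega$ and conclude that some conjugate $H$ of $G_\pi$ meets $A$ and $B$ in Hall $\pi$-subgroups. The cleaner route, however, is the standard one: show that $A\cap B$ has a Hall $\pi$-subgroup $D$ with $|D| = |A\cap B|_\pi$. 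This requires the $\pi$-Sylow hypotheses, since $A\cap B$ need not itself satisfy $D_\pi$.

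\textbf{Step 2 (extending to the factors).} Choose a maximal $\pi$-subgroup $D$ of $A\cap B$. Using $D_\pi$ in $A$, choose $A_\pi\ge D$; using $D_\pi$ in $B$, choose $B_\pi \ge D$. Using $D_\pi$ in $G$, choose Hall $\pi$-subgroups $H_1\ge A_\pi$ and $H_2\ge B_\pi$. These are conjugate, so $H_2 = H_1^{g}$, and we can write $g = ba$ with $a\in A$, $b\in B$, because $G=BA$. Replacing $A_\pi$ by $A_\pi^{a}$ and $B_\pi$ by $B_\pi^{b^{-1}}$, we obtain Hall subgroups of the factors lying in a common Hall $\pi$-subgroup $H$. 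The price is that $D$ is now only controlled up to this conjugation.

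\textbf{Step 3 (counting).} Once $A_\pi, B_\pi\le H$, we have $A_\pi B_\pi\subseteq H$, so
\[
\frac{|A|_\pi |B|_\pi}{|A_\pi\cap B_\pi|}\le |G|_\pi = \frac{|A|_\pi|B|_\pi}{|A\cap B|_\pi}.
\]
This reduces the problem to proving $|A_\pi\cap B_\pi|\ge |A\cap B|_\pi$. When that holds we get $A_\pi B_\pi = H$, which is then a subgroup equal to $G_\pi$.

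\textbf{Main obstacle.} The hard point is arranging that $A_\pi\cap B_\pi$ has order $|A\cap B|_\pi$ after the conjugations in Step 2. I would try to avoid this by choosing the conjugating element inside $N_G(D)$, or by a different order of choices. Start with $H$ a Hall $\pi$-subgroup of $G$. Use the transitivity of $A$ on the set of Hall $\pi$-subgroups of $G$: since $G = AB$, conjugation by $A$ can be arranged so that $H\cap B$ is Hall in $B$, and then, with $B$ fixed, conjugation by $B$ makes $H\cap A$ Hall in $A$. Each step uses $D_\pi$ in the relevant factor, together with the fact that every $\pi$-subgroup of $A$ or $B$ is contained in some conjugate of $H$. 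Then $A_\pi=H\cap A$ and $B_\pi = H\cap B$ satisfy $A_\pi\cap B_\pi = H\cap A\cap B$. The needed bound $|H\cap A\cap B|\ge|A\cap B|_\pi$ would follow if $H\cap A\cap B$ is Hall in $A\cap B$. I expect to prove this from $|A|_\pi|B|_\pi/|A_\pi\cap B_\pi| \le |H|$ combined with the reverse inequality obtained from $|AB|$. This inequality juggling is where I would spend the effort.
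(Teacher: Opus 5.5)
\textbf{There is a gap: your proposal never reaches the conclusion, because in Step~3 you have the needed inequality backwards.}

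The paper gives no proof of this statement; it only cites Huppert. The standard argument is essentially your Step~2 followed by a one-line count. The inclusion $A_\pi B_\pi\subseteq H$ gives $|A|_\pi|B|_\pi/|A_\pi\cap B_\pi|\le|G|_\pi$. This is \emph{equivalent} to $|A_\pi\cap B_\pi|\ge|A\cap B|_\pi$. So the bound you call the main obstacle already follows from the inclusion. It also cannot give $A_\pi B_\pi=H$, since it only bounds $|A_\pi B_\pi|$ from above.

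What you actually need for equality is the reverse bound $|A_\pi\cap B_\pi|\le|A\cap B|_\pi$, and that is trivial. $A_\pi\cap B_\pi$ is a $\pi$-subgroup of $A\cap B$, so its order divides $|A\cap B|_\pi$. Hence $|A_\pi B_\pi|\ge|A|_\pi|B|_\pi/|A\cap B|_\pi=|G|_\pi=|H|$, and therefore $A_\pi B_\pi=H$.

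Consequently Step~1 and the choice of $D$ are unnecessary. That $A\cap B$ has a Hall $\pi$-subgroup (namely $A_\pi\cap B_\pi$) is an output of the argument, not an input. The inequality juggling in your last paragraph is aimed at the wrong target.

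Two further points need fixing.

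\emph{The factorisation in Step~2.} With $H_2=H_1^g$ and $g=ba$ ($b\in B$, $a\in A$), the groups $A_\pi^a\le H_1^a$ and $B_\pi^{b^{-1}}\le H_2^{b^{-1}}=H_1^{bab^{-1}}$ need not lie in a common Hall subgroup. Factor instead $g=ab$, using $G=AB$. Then $H_1^a=H_2^{b^{-1}}$ is a Hall $\pi$-subgroup of $G$ containing the Hall $\pi$-subgroup $A_\pi^a$ of $A$ and the Hall $\pi$-subgroup $B_\pi^{b^{-1}}$ of $B$.

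\emph{Transitivity of $A$.} Your claim that $A$ acts transitively by conjugation on the Hall $\pi$-subgroups of $G$ is false in general. Take $S_3=AB$ with $A=\langle(12)\rangle$, $B=\langle(123)\rangle$ and $\pi=\{2\}$. Conjugation by $A$ fixes $\langle(12)\rangle$ and swaps the other two Sylow $2$-subgroups, so it has two orbits. What is true, and all that is needed, is that every conjugate of $H$ has the form $H^{ab}$ with $a\in A$, $b\in B$.

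With these corrections, your Step~2 plus the count above is a complete proof. It uses only the $\pi$-Sylow property in $G$ and the existence of Hall $\pi$-subgroups in $A$ and $B$.
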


As a consequence we have:

\begin{theorem}
\label{teo:main}
Let $\pi$ be a set of primes, and let $G = KC = KD = DC$ be a trifactorised group, where $K$ is normal in $G$. Assume that $G$, $D$ and $C$ satisfy the $\pi$-Sylow theorem. Then, there exists a Hall $\pi$-subgroup of $G$ which is a trifactorised subgroup $G_\pi = K_\pi C_\pi= K_\pi D_\pi = D_\pi C_\pi $, where $K_\pi$, $D_\pi$ and $C_\pi$ are Hall $\pi$-subgroups of $K$, $D$ and~$C$, respectively.
\end{theorem}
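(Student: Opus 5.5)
The plan is to obtain the Hall $\pi$-subgroup from the factorisation $G = DC$ using Theorem~\ref{teo:pi-Sylowtheroem}, and then recover the $K$-factor by intersecting with the normal subgroup $K$. The remaining two factorisations will follow by comparing orders.

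First, since $G$, $D$ and $C$ satisfy the $\pi$-Sylow theorem and $G = DC$, Theorem~\ref{teo:pi-Sylowtheroem} gives Hall $\pi$-subgroups $D_\pi$ of $D$ and $C_\pi$ of $C$ such that $G_\pi = D_\pi C_\pi$ is a Hall $\pi$-subgroup of $G$. Put $K_\pi = K \cap G_\pi$. Since $K \trianglelefteq G$, the standard fact about intersections of normal subgroups with Hall subgroups applies. The group $K_\pi$ is a $\pi$-group, and $K_\pi = K\cap G_\pi$ has index $|KG_\pi : G_\pi|$, a divisor of the $\pi'$-number $|G:G_\pi|$, so $|K:K_\pi|$ is a $\pi'$-number. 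Hence $K_\pi$ is a Hall $\pi$-subgroup of $K$. Clearly $K_\pi C_\pi \subseteq G_\pi$ and $K_\pi D_\pi \subseteq G_\pi$.

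It remains to show that these inclusions of sets are equalities, which I will do by counting. From $G = KC$ we get $|G| = |K|\,|C|/|K\cap C|$, and taking $\pi$-parts gives
\[ |G_\pi| = |G|_\pi = \frac{|K|_\pi\,|C|_\pi}{|K\cap C|_\pi}. \]
On the other hand, $K_\pi \cap C_\pi = K \cap G_\pi \cap C_\pi = K\cap C_\pi = (K\cap C)\cap C_\pi$. Now $K\cap C$ is normal in $C$, so by the same normal-subgroup argument $(K\cap C)\cap C_\pi$ is a Hall $\pi$-subgroup of $K\cap C$. Its order is therefore $|K\cap C|_\pi$, and so
\[ |K_\pi C_\pi| = \frac{|K_\pi|\,|C_\pi|}{|K_\pi\cap C_\pi|} = \frac{|K|_\pi\,|C|_\pi}{|K\cap C|_\pi} = |G_\pi|. \]
This forces $G_\pi = K_\pi C_\pi$. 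The identical argument, using $G = KD$ and the fact that $K\cap D \trianglelefteq D$, gives $G_\pi = K_\pi D_\pi$.

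The only delicate point is computing $|K_\pi\cap C_\pi|$ and $|K_\pi\cap D_\pi|$. The key observation is that these intersections are $C_\pi \cap (K\cap C)$ and $D_\pi\cap(K\cap D)$, which are intersections of Hall subgroups with normal subgroups of $C$ and $D$, respectively. With that in hand, the counting is routine. Note that no $\pi$-Sylow hypothesis on $K$ is needed.
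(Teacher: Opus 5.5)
Your proof is correct and follows the paper's route exactly: apply Theorem~\ref{teo:pi-Sylowtheroem} to $G = DC$, set $K_\pi = G_\pi \cap K$, and use the normality of $K$ to see that $K_\pi$ is a Hall $\pi$-subgroup of $K$. Your order count, which identifies $K_\pi \cap C_\pi$ and $K_\pi \cap D_\pi$ as the Hall $\pi$-subgroups $C_\pi \cap (K\cap C)$ and $D_\pi \cap (K\cap D)$, simply makes explicit the equalities $G_\pi = K_\pi C_\pi = K_\pi D_\pi$ that the paper asserts without detail.
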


\begin{proof}
By Theorem~\ref{teo:pi-Sylowtheroem}, there exist $C_\pi$ and $D_\pi$ Hall $\pi$-subgroups of $C$ and $D$, respectively, such that $G_\pi = D_\pi C_\pi$ is a Hall $\pi$-subgroup of $G$. Consider $K_\pi = G_\pi \cap K$. Since $K \unlhd G$, it follows that $K_\pi$ is a Hall $\pi$-subgroup of~$K$. Hence, $G_\pi = D_\pi C_\pi = K_\pi D_\pi = K_\pi C_\pi$ is a Hall trifactorised $\pi$-subgroup of~$G$.
\end{proof}

Let $(B,+,\cdot)$ be a skew brace, call $K = (B,+)$ and $C = (B,\cdot)$, and let $G = KC = KD = DC$ be the associated trifactorised group. If $G$ satisfies the hypotheses of Theorem~\ref{teo:main}, then $C_\pi$ acts on $K_\pi = G_\pi \cap K$ by the restriction of the $\lambda$-map. Thus, $D_\pi$ is the associated diagonal subgroup to $K_\pi$ and~$C_\pi$. Therefore, $(K_\pi,+,\cdot)$ is a subbrace of $(B,+,\cdot)$. Consequently, a Sylow theorem (\cite[Theorem~2.1]{truman}) and a Hall theorem (\cite[Theorem~2.3]{truman}) for skew braces are corollaries of Theorem~\ref{teo:main}. Furthermore, according to~\cite[Theorem~A]{soluble} every skew brace without subbraces is a trivial skew brace isomorphic to a cyclic group of prime order. Hence, a Cauchy theorem (\cite[Theorem~2.2]{truman}) also holds for skew braces.

\section*{Acknowledgements}
\begin{sloppypar}
This work is supported by the grant PID2024-159495NB-I00, funded by MICIU/AEI/10.13039/501100011033 and by ERDF/EU, and by the grant CIAICO/2023/007 from the Conselleria d’Educació, Cultura, Universitats i Ocupació of the Generalitat Valenciana. The second author is funded by the Spanish Ministry of Science, Innovation and Universities through an FPU predoctoral grant FPU24/01319.
\end{sloppypar}

\bibliographystyle{plain}

\begin{thebibliography}{99}
\bibitem{soluble}
A.~Ballester-Bolinches, R.~Esteban-Romero, P.~Jimen{\'e}z-Seral, and V.~P{\'e}rez-Calabuig.
\newblock Soluble skew left braces and soluble solutions of the {Y}ang-{B}axter equation.
\newblock {\em Adv. Math.}, 455:109880, 2024.

\bibitem{categ_trip}
A.~Ballester-Bolinches, R.~Esteban-Romero, P.~P{\'e}rez-Altarriba, and V.~P{\'e}rez-Calabuig.
\newblock Categories of Skew Left Braces and Trifactorised Groups. 
\newblock {\em Commun. Math. Stat.}, doi:10.1007/s40304-025-00465-2, 2026.

\bibitem{Huppert}
B.~Huppert.
\newblock {\em Endliche Gruppen I}.
\newblock {Springer-Verlag, Berlin}, 1967.

\bibitem{truman}
P.~J.~Truman.
\newblock Analogues of Sylow's first theorem, Cauchy's theorem and Hall's theorem for skew braces.
\newblock {\em arXiv:2606.18414}, doi:10.48550/arXiv.2606.18414.
\end{thebibliography}

\end{document}